\documentclass[11pt,reqno]{amsart}
\usepackage[T1]{fontenc}
\usepackage{lmodern}
\usepackage{amsmath,amssymb,amsthm,mathtools}
\usepackage{mathrsfs}
\usepackage[margin=1.05in]{geometry}
\usepackage{microtype}
\usepackage{enumitem}
\usepackage[hidelinks]{hyperref}
\numberwithin{equation}{section}
\newtheorem{theorem}{Theorem}[section]
\newtheorem{proposition}[theorem]{Proposition}
\newtheorem{lemma}[theorem]{Lemma}
\newtheorem{corollary}[theorem]{Corollary}
\theoremstyle{definition}

\newtheorem{example}[theorem]{Example}
\theoremstyle{remark}
\newtheorem{remark}[theorem]{Remark}
\newcommand{\R}{\mathbb R}
\newcommand{\Q}{\mathbb Q}
\newcommand{\Spp}[1]{\mathbb S_{++}^{#1}}

\newcommand{\one}{\mathbf 1}
\DeclareMathOperator{\diag}{diag}
\DeclareMathOperator{\Diag}{Diag}
\DeclareMathOperator{\sym}{Sym}
\DeclareMathOperator{\adj}{adj}
\DeclareMathOperator{\IRGA}{IRGA}
\DeclareMathOperator{\RGA}{RGA}
\hypersetup{
 pdftitle={Entrywise nonnegativity of the inverse relative gain array in dimension six},
 pdfauthor={Jie Wang},
 pdfsubject={The six-dimensional inverse relative gain array theorem},
 pdfkeywords={relative gain array, positive definite matrices, sum of squares}
}

\title[The inverse relative gain array in dimension six]{Entrywise nonnegativity of the inverse relative gain array in dimension six}
\author{Jie Wang}
\address{Jie Wang, State Key Laboratory of Mathematical Sciences, Academy of Mathematics and Systems Science, Chinese Academy of Sciences, Beijing, China}
\email{wangjie212@amss.ac.cn}
\date{\today}
\subjclass[2020]{15B48, 15A45}
\keywords{Relative gain array, positive definite matrix, Hadamard product, inverse nonnegativity, majorization}
\begin{document}
\begin{abstract}
We prove that $(G\circ G^{-1})^{-1}$ is entrywise nonnegative for every real symmetric positive definite matrix $G$ of order at most six,
resolving the inverse relative gain array conjecture of Jeffrey Uhlmann. The proof combines
an explicit sum-of-squares identity on $\Lambda^2\R^4$ with two
bordering arguments. More generally, we establish an equivalence between
a biquadratic inequality, a quadratic positivity property, and inverse
relative gain array nonnegativity in three consecutive dimensions.
As consequences, we characterize the fixed space and averaging
properties of the inverse interaction operator and prove a reverse Schur--Horn majorization inequality for matrices diagonalized by real symmetric positive definite matrices. A rational example of order seven
shows that both dimension bounds are sharp.
\end{abstract}
\maketitle
\enlargethispage{2pt}
\section{Introduction}\label{sec:introduction}

The relative gain array (RGA) of a nonsingular real matrix $G$ is
\[
 \RGA(G)\coloneqq G\circ G^{-\mathrm{T}},
\]
where $\circ$ denotes the Hadamard product. Introduced by Bristol
\cite{Bristol1966} to measure interaction in multivariable process
control, it has unit row and column sums and is invariant under
nonsingular diagonal scalings. Johnson and Shapiro
\cite{JohnsonShapiro1986} studied its matrix-theoretic properties.

For real symmetric positive definite $G$, the Schur product theorem
implies that $G\circ G^{-1}$ is positive definite. Its inverse also has
unit row and column sums, so it is doubly stochastic precisely when it
is entrywise nonnegative. Uhlmann conjectured that this holds for every
$G$ of order at most six. Uhlmann and Wang \cite{UhlmannWang2021}
proved the cases of order at most four by explicitly providing a sum-of-squares certificate and exhibited
a counterexample of order seven. Uhlmann
\cite{UhlmannSPDD2023,Uhlmann2023} subsequently connected this
conjecture to matrices whose diagonals majorize their eigenvalue vectors.

We prove the conjecture in its full range. Theorem~\ref{thm:main}
establishes entrywise nonnegativity of $(G\circ G^{-1})^{-1}$ for every
real symmetric positive definite $G$ of order six; the smaller orders
follow by adjoining an identity block. This conclusion strengthens the
dimension-independent inequalities
\[
 G\circ G^{-1}\succeq I,\qquad
 0\prec(G\circ G^{-1})^{-1}\preceq I,
\]
due to Fiedler; see Johnson \cite{Johnson1978}.

The proof rests on a four-dimensional inequality. Write $\Spp{m}$ for
the real symmetric positive definite matrices of order $m$, and set
\[
 R_C=(C\circ C^{-1})^{-1},\qquad d_C(u)=u\circ(Cu).
\]
We show that
\begin{equation}\label{eq:intro-four}
 d_C(u)^{\mathrm{T}}R_Cd_C(v)\geq\frac14(u^{\mathrm{T}}Cv)^2
 \qquad(C\in\Spp{4},\ u,v\in\R^4).
\end{equation}
A stronger polarized inequality reduces to nonnegativity of a quadratic
form on decomposable bivectors in the second exterior power
$\Lambda^2\R^4$. Adding a multiple
of the Pl\"ucker relation leaves its values on these bivectors unchanged.
After a normalized Cholesky substitution and an invertible congruence,
the corrected form admits a ten-square decomposition with positive
rational weights.

Two bordering arguments then raise the dimension. For a positive
definite bordered matrix with complementary block $C$, we represent
the vector determining the signs in a row of its inverse RGA as
\[
 t=R_C\diag(CX),\qquad X\succeq0.
\]
Combining this representation with \eqref{eq:intro-four} gives
\[
 R_A d_A(w)\geq0\quad\text{entrywise}
 \qquad(A\in\Spp{5},\ w\in\R^5).
\]
A second application yields inverse RGA nonnegativity
in order six. Theorem~\ref{thm:equivalence} shows that both steps are
reversible: the corresponding three properties in orders $m$, $m+1$,
and $m+2$ are equivalent.

We derive two consequences. Theorem~\ref{thm:rga-structure} describes
the inverse interaction operator in orders at most six: it is positive
definite and doubly stochastic, does not increase convex sums or
$\ell^p$ norms for $1\leq p\leq\infty$, and has fixed space consisting of
vectors constant on the connected components of the graph of $G$.
Its powers converge to averaging on those components.
Theorem~\ref{thm:reverse-schur-horn} establishes
\[
 \lambda\prec\diag(P\Diag(\lambda) P^{-1})
 \qquad(P\in\Spp{n},\ \lambda\in\R^n,\ 1\leq n\leq6),
\]
where $\Diag(\lambda)$ is the diagonal matrix with diagonal $\lambda$ and
$\prec$ denotes majorization. This reverses the classical Schur--Horn
inequality \cite{Horn1954} for this class of generally nonsymmetric
matrices and extends Uhlmann's result \cite{UhlmannSPDD2023} to
the full conjectured range. For fixed $P$, the assertion for every
$\lambda$ is equivalent to $R_P\geq0$. A rational example of order
seven therefore establishes sharpness of both dimension bounds.

Section~\ref{sec:preliminaries} develops the bordering representation.
Section~\ref{sec:main} proves the main theorem and the equivalence in
consecutive dimensions. Section~\ref{sec:applications} contains the
applications, and Section~\ref{sec:conclusion} discusses the dimensional
threshold and extensions to structured matrices.

\section{Preliminaries}\label{sec:preliminaries}

Let \(\mathbb S^m\) denote the real symmetric matrices of order \(m\), and write \(\mathbb S_+^m\) and \(\mathbb S_{++}^m\) for the positive semidefinite and positive definite cones. The symbols $\succeq$ and $\succ$ refer to the Loewner
order; $\geq$ between vectors or matrices denotes entrywise comparison. All vectors are
columns. We use
$\one$ for the all-ones vector and $e_i$ for the $i$th coordinate vector,
with dimension determined by context.

For a vector $u$, let $\Diag(u)$ be the diagonal matrix with diagonal $u$.
We also write $\Diag(a_1,\ldots,a_m)$ for a diagonal matrix. For a
square matrix $M$, $\diag(M)$ denotes its diagonal as a vector and
$\sym(M)=(M+M^{\mathrm{T}})/2$. The symbols $\|\cdot\|_F$ and $\|\cdot\|_2$
denote the Frobenius and spectral matrix norms, respectively; for
vectors, $\|\cdot\|_p$ is the usual $\ell^p$ norm.
For $C\in\Spp{m}$, define
\begin{equation}\label{eq:notation}
 K_C=C\circ C^{-1},\qquad R_C=K_C^{-1},\qquad
 d_C(u)=\Diag(u)Cu.
\end{equation}
The polarization of $d_C$ is
\begin{equation}\label{eq:polarization}
 h_C(u,v)=\frac12(\Diag(u)Cv+\Diag(v)Cu).
\end{equation}
The Schur product theorem implies $K_C,R_C\in\Spp{m}$. Symmetry of
$C^{-1}$ gives
\begin{equation}\label{eq:rowsum}
 (K_C\one)_i=\sum_j C_{ij}(C^{-1})_{ji}=1,
 \qquad K_C\one=R_C\one=\one.
\end{equation}

\subsection{Normalization}

If $D$ is nonsingular and diagonal, direct calculation gives
\begin{equation}\label{eq:scaling}
 K_{DCD}=K_C,\qquad R_{DCD}=R_C,\qquad
 d_{DCD}(u)=d_C(Du).
\end{equation}
Similarly, for a permutation matrix $P$,
\begin{equation}\label{eq:permutation}
 R_{PCP^{\mathrm{T}}}=PR_CP^{\mathrm{T}},\qquad
 d_{PCP^{\mathrm{T}}}(u)=P d_C(P^{\mathrm{T}}u).
\end{equation}
These transformations preserve the positivity properties considered
below. In particular,
\[
 R_{DCD}d_{DCD}(u)=R_Cd_C(Du),
\]
and $u\mapsto Du$ is bijective. Permutations also permute the output
coordinates.

Given $C\in\Spp{m}$ and $x\in\R^m$, set
\begin{equation}\label{eq:border}
 \mathcal B(C,x)=
 \begin{pmatrix}
 1+x^{\mathrm{T}}Cx&x^{\mathrm{T}}C\\
 Cx&C
 \end{pmatrix}.
\end{equation}
Its Schur complement over $C$ equals one, so $\mathcal B(C,x)\succ0$. Conversely,
write an arbitrary positive definite matrix as
\[
 A=\begin{pmatrix}\alpha&c^{\mathrm{T}}\\c&C\end{pmatrix},\qquad
 s=\alpha-c^{\mathrm{T}}C^{-1}c>0.
\]
For $D=\Diag(s^{-1/2},1,\ldots,1)$ and
$x=s^{-1/2}C^{-1}c$, we have $DAD=\mathcal B(C,x)$. Thus every distinguished
coordinate admits the normalization \eqref{eq:border}.

\subsection{A positive semidefinite representation}

The following representation is the basis of both bordering arguments.

\begin{lemma}\label{lem:border}
Let $C\in\Spp{m}$ and $x\in\R^m$. Put
\begin{equation}\label{eq:border-data}
 \begin{gathered}
 K=K_C,\quad d=d_C(x),\quad N=K+\Diag(x)C\Diag(x),\\
 b=N^{-1}\one,\quad t=\one-b,\quad z=\Diag(x)b,\quad
 X=zz^{\mathrm{T}}+\Diag(t)C^{-1}\Diag(t).
 \end{gathered}
\end{equation}
Then $X\succeq0$ and
\begin{equation}\label{eq:psd-representation}
 \diag(CX)=Kt,\qquad t=R_C\diag(CX).
\end{equation}
Moreover, if $A=\mathcal B(C,x)$, then
\begin{equation}\label{eq:first-row}
 e_1^{\mathrm{T}}R_A=\frac1\sigma(1,t^{\mathrm{T}}),\qquad
 \sigma=1+x^{\mathrm{T}}Cx-d^{\mathrm{T}}N^{-1}d>0.
\end{equation}
\end{lemma}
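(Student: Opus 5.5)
The plan is a direct computation: first the identities \eqref{eq:psd-representation}, which are purely algebraic consequences of $K\one=\one$ from \eqref{eq:rowsum} and the definition of $N$; then the explicit form of $K_A$ for the bordered matrix, from which \eqref{eq:first-row} follows by block inversion.

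\emph{Positivity and the diagonal identity.} The matrix $X$ is a sum of the rank-one positive semidefinite matrix $zz^{\mathrm T}$ and the congruence $\Diag(t)C^{-1}\Diag(t)$ of $C^{-1}\succ0$, so $X\succeq0$. Note also that $N=K+\Diag(x)C\Diag(x)\succ0$, hence $b$ is well defined. For the diagonal I compute the two pieces of $\diag(CX)$ separately:
\[
 \diag(Czz^{\mathrm T})=z\circ(Cz),\qquad
 \bigl(C\Diag(t)C^{-1}\Diag(t)\bigr)_{ii}=t_i\sum_j C_{ij}(C^{-1})_{ji}t_j=t_i(Kt)_i .
\]
So it suffices to show $z\circ(Cz)=b\circ(Kt)$, since then $\diag(CX)=(b+t)\circ(Kt)=Kt$. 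Using $z=\Diag(x)b$ we get $z\circ(Cz)=b\circ\bigl(\Diag(x)C\Diag(x)b\bigr)=b\circ\bigl((N-K)b\bigr)$. Since $Nb=\one=K\one$, this equals $b\circ\bigl(K(\one-b)\bigr)=b\circ(Kt)$. Applying $R_C=K^{-1}$ then gives $t=R_C\diag(CX)$.

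\emph{The bordered matrix.} Because the Schur complement of $C$ in $A=\mathcal B(C,x)$ equals one, block inversion gives
\[
 A^{-1}=\begin{pmatrix}1&-x^{\mathrm T}\\-x&C^{-1}+xx^{\mathrm T}\end{pmatrix}.
\]
Taking the Hadamard product with $A$:
\begin{itemize}[leftmargin=*]
 \item the $(1,1)$ entry is $1+x^{\mathrm T}Cx$;
 \item the off-diagonal block is $-(Cx)\circ x=-d$;
 \item the lower block is $C\circ C^{-1}+C\circ xx^{\mathrm T}=K+\Diag(x)C\Diag(x)=N$.
\end{itemize}
Hence
\[
 K_A=\begin{pmatrix}1+x^{\mathrm T}Cx&-d^{\mathrm T}\\-d&N\end{pmatrix}.
\]
By the Schur product theorem $K_A\succ0$. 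Therefore its Schur complement $\sigma=1+x^{\mathrm T}Cx-d^{\mathrm T}N^{-1}d$ over $N$ is positive, and the first row of $R_A=K_A^{-1}$ is $\sigma^{-1}(1,(N^{-1}d)^{\mathrm T})$.

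\emph{The key identity.} It remains to check $N^{-1}d=t$, which is equivalent to $N\one=\one+d$. This holds because $N\one=K\one+\Diag(x)Cx=\one+d$. Consequently $N^{-1}d=\one-N^{-1}\one=\one-b=t$, which is \eqref{eq:first-row}.

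I expect no real obstacle. The only nonobvious step is spotting that the cross term $z\circ(Cz)$ collapses via $(N-K)b=\one-Kb=K(\one-b)$, which uses the row-sum identity $K\one=\one$ in an essential way.
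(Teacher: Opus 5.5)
Your proof is correct and follows essentially the same route as the paper's. Both arguments block-invert $\mathcal B(C,x)$ to get $K_A$ with lower block $N$, take $\sigma>0$ from $K_A\succ0$, use $N\one=\one+d$ to get $t=N^{-1}d$, and compute $\diag(CX)$ as $b\circ(Kt)+t\circ(Kt)=Kt$. Your inline step $(N-K)b=K(\one-b)$ is exactly the paper's identity $Kt=\Diag(x)C\Diag(x)b$.
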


\begin{proof}
Since $K\succ0$ and $\Diag(x)C\Diag(x)\succeq0$, we have $N\succ0$.
Block inversion gives
\[
 A^{-1}=\begin{pmatrix}1&-x^{\mathrm{T}}\\-x&C^{-1}+xx^{\mathrm{T}}\end{pmatrix},
 \qquad
 K_A=\begin{pmatrix}1+x^{\mathrm{T}}Cx&-d^{\mathrm{T}}\\-d&N\end{pmatrix}.
\]
Since $K_A\succ0$, its Schur complement $\sigma$ is positive.
Equation~\eqref{eq:rowsum} implies $N\one=\one+d$, whence
\begin{equation}\label{eq:t-identities}
 t=N^{-1}d,\qquad Kt=\Diag(x)C\Diag(x)b.
\end{equation}
The block inverse formula proves \eqref{eq:first-row}.
Moreover,
\begin{equation}\label{eq:sigma-rowsum}
 \sigma=1+\one^{\mathrm{T}}t.
\end{equation}
Indeed, $d=N\one-\one$ and $Nt=d$ imply
$d^{\mathrm{T}}t=\one^{\mathrm{T}}d-\one^{\mathrm{T}}t=x^{\mathrm{T}}Cx-\one^{\mathrm{T}}t$.

Both summands defining $X$ are positive semidefinite. Using
\eqref{eq:t-identities}, we obtain
\[
 \diag(Czz^{\mathrm{T}})=z\circ(Cz)
 =b\circ(\Diag(x)C\Diag(x)b)=b\circ(Kt).
\]
For the other summand, symmetry of $C^{-1}$ gives, coordinatewise,
\[
 (C\Diag(t)C^{-1}\Diag(t))_{ii}
 =t_i\sum_j C_{ij}(C^{-1})_{ij}t_j=t_i(Kt)_i.
\]
Adding and using $b+t=\one$ proves
$\diag(CX)=Kt$. Multiplication by $R_C$ yields the second identity.
\end{proof}

\begin{remark}\label{rem:no-sign}
Neither $X\succeq0$ nor $\sigma>0$ requires a sign condition on $b$ or
$t$. In particular, the lemma does not assume entrywise nonnegativity
of $N^{-1}$ or $R_C$.
\end{remark}

Define the linear map $\Phi_C:\mathbb S^m\to\mathbb S^m$ by
\begin{equation}\label{eq:Phi}
 \Phi_C(Y)=\sym\bigl(\Diag(R_C\diag(CY))C\bigr).
\end{equation}
For $Y=uu^{\mathrm{T}}$ and $v\in\R^m$, its quadratic form is
\begin{equation}\label{eq:Phi-rankone}
 v^{\mathrm{T}}\Phi_C(uu^{\mathrm{T}})v=d_C(v)^{\mathrm{T}}R_Cd_C(u).
\end{equation}
Consequently, for a fixed $\eta\in\R$, the inequality
$d_C(u)^{\mathrm{T}}R_Cd_C(v)\geq\eta(u^{\mathrm{T}}Cv)^2$ for all $u,v$ is equivalent to
\begin{equation}\label{eq:Phi-bound}
 \Phi_C(Y)\succeq\eta CYC\qquad(Y\succeq0).
\end{equation}
This follows by taking $Y=uu^{\mathrm{T}}$ and extending by linearity along a
rank-one decomposition of $Y\succeq0$.

\section{The main result}\label{sec:main}

\begin{theorem}\label{thm:main}
For every real symmetric positive definite matrix $G$ of order six,
\[
 \IRGA(G)\coloneqq(G\circ G^{-\mathrm{T}})^{-1}=(G\circ G^{-1})^{-1}
\]
is entrywise nonnegative. In particular, it is a positive definite
doubly stochastic matrix.
\end{theorem}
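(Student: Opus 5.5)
Following the strategy outlined in the introduction, the proof will go through three dimensional levels, $4\to5\to6$, with Lemma~\ref{lem:border} as the engine for each upward step. By \eqref{eq:permutation} it suffices to show that the first row of $R_G$ is nonnegative for every $G\in\Spp{6}$. By the normalization discussion after \eqref{eq:border}, we may then assume $G=\mathcal B(A,x)$ with $A\in\Spp{5}$ and $x\in\R^5$. By \eqref{eq:first-row}, the first row of $R_G$ equals $\sigma^{-1}(1,t^{\mathrm T})$ with $\sigma>0$, so the whole task is to show $t\geq0$ entrywise. Lemma~\ref{lem:border} gives $t=R_A\diag(AX)$ with $X\succeq0$. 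Writing $X=\sum_k w_kw_k^{\mathrm T}$, we get $\diag(AX)=\sum_k d_A(w_k)$, so $t=\sum_k R_Ad_A(w_k)$. Hence the order-six theorem reduces to the five-dimensional statement
\[
 R_Ad_A(w)\geq0\quad\text{entrywise}\qquad(A\in\Spp{5},\ w\in\R^5).
\]

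To prove this statement, I would apply the same bordering one level down. Fix a coordinate, using \eqref{eq:permutation} to make it the first, and write $A=\mathcal B(C,y)$ with $C\in\Spp{4}$, which is allowed by \eqref{eq:scaling}. I then need the first entry of $R_Ad_A(w)$. Using \eqref{eq:first-row}, $e_1^{\mathrm T}R_A=\sigma^{-1}(1,s^{\mathrm T})$, where $s=R_C\diag(CY)$, $Y\succeq0$ comes from Lemma~\ref{lem:border}, and $\sigma=1+\one^{\mathrm T}s$. Write $w=(w_0,\tilde w)$. Then $d_A(w)$ splits into a first coordinate $w_0(w_0(1+y^{\mathrm T}Cy)+y^{\mathrm T}C\tilde w)$ and a tail $\Diag(\tilde w)(w_0Cy+C\tilde w)=d_C(\tilde w)+w_0\Diag(\tilde w)Cy$.

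The quantity $s^{\mathrm T}\cdot\mathrm{tail}$ is $\diag(CY)^{\mathrm T}R_C$ applied to this tail. Expanding $Y=\sum u_ku_k^{\mathrm T}$ turns it into sums of $d_C(u_k)^{\mathrm T}R_C\,h_C(\tilde w,\cdot)$-type terms. I would bound these from below using the four-dimensional inequality \eqref{eq:intro-four}, in its operator form \eqref{eq:Phi-bound} with $\eta=\tfrac14$: $\Phi_C(Y)\succeq\tfrac14 CYC$. The goal is to show that the first-coordinate term plus this lower bound is a nonnegative quadratic, presumably a completed square in $w_0$ and $\tilde w$. The explicit form of $Y=zz^{\mathrm T}+\Diag(s)C^{-1}\Diag(s)$, together with the identities \eqref{eq:t-identities} and \eqref{eq:sigma-rowsum}, should make the cross terms combine. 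The constant $\tfrac14$ is exactly what makes the discriminant of a quadratic of the form $a w_0^2+2bw_0+c$ with $c\geq b^2/a$ close.

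The four-dimensional inequality \eqref{eq:intro-four} itself I would prove as the introduction indicates. Normalize $C$ by \eqref{eq:scaling} to unit diagonal and use a Cholesky parametrization. The polarized form $d_C(u)^{\mathrm T}R_Cd_C(v)-\tfrac14(u^{\mathrm T}Cv)^2$, multiplied by $\det K_C$ to clear denominators, becomes a biquadratic form that depends on $u,v$ only through the bivector $u\wedge v$. One then adds a multiple of the Plücker relation to this quadratic form on $\Lambda^2\R^4$ and exhibits a ten-square SOS certificate with positive rational weights.

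I expect the main obstacle to be this certificate: finding the Plücker multiplier and congruence for which the form becomes a positive combination of squares. I would search for it numerically by a semidefinite program over the multiplier and then rationalize the result. The second delicate point is the five-dimensional step, namely getting the algebra to close exactly with constant $\tfrac14$. Finally, the doubly stochastic conclusion of Theorem~\ref{thm:main} follows from \eqref{eq:rowsum}, and positive definiteness follows from the Schur product theorem.
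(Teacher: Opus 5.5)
Your reduction from order six to order five is the paper's own argument and is correct. Both lower levels, however, have genuine gaps.

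\emph{Order five.} In your coordinates $w=(w_0,\tilde w)$ one has
\[
\sigma[R_Ad_A(w)]_1=(1+y^{\mathrm T}Cy)w_0^2+w_0\,\tilde w^{\mathrm T}(I+\Diag(s))Cy+\tilde w^{\mathrm T}\Phi_C(Y)\tilde w .
\]
Your plan is to replace the last term by $\tfrac14\tilde w^{\mathrm T}CYC\tilde w$ and complete the square in $w_0$. This fails. Take $C=I_4$ and $y=y_1e_1$ with $y_1\neq0$. Then $R_C=I$, $s=s_1e_1$ with $s_1=y_1^2/(1+y_1^2)$, and $Y=s_1e_1e_1^{\mathrm T}$. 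The bound becomes
\[
(1+y_1^2)w_0^2+(1+s_1)y_1w_0\tilde w_1+\tfrac14 s_1\tilde w_1^2,
\]
whose discriminant $y_1^2[(1+s_1)^2-1]$ is positive. So the lower bound is indefinite, and the constant $\tfrac14$ does not close the discriminant in these coordinates. Your step ``the cross terms should combine'' is exactly what is missing.

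The missing idea is the congruence $\tilde w=v-w_0y$, i.e.\ $Aw=(w_0+y^{\mathrm T}Cv,\,Cv)$. In the new variables
\[
\sigma[R_Ad_A(w)]_1=w_0^2+w_0\,z^{\mathrm T}Cv+v^{\mathrm T}\Diag(s)Cv,\qquad z=\Diag(y)(\one-s).
\]
Now the $w_0^2$ coefficient is exactly $1$. The cross term involves only $z$, which is the rank-one part of $Y=zz^{\mathrm T}+\Diag(s)C^{-1}\Diag(s)$. Applying $v^{\mathrm T}\Phi_C(Y)v\geq\tfrac14v^{\mathrm T}CYCv$ then gives
\[
\bigl(w_0+\tfrac12z^{\mathrm T}Cv\bigr)^2+\tfrac14v^{\mathrm T}C\Diag(s)C^{-1}\Diag(s)Cv\geq0 .
\]
The loss in the four-dimensional bound must be taken in these coordinates; in yours it leaks into the $w_0^2$ coefficient and the cross term.

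\emph{Order four.} You claim that $d_C(u)^{\mathrm T}R_Cd_C(v)-\tfrac14(u^{\mathrm T}Cv)^2$ depends only on $u\wedge v$. This is false: it does not vanish at $u=v$ (for $C=I_4$ and $u=v=e_1$ it equals $\tfrac34$). Hence no quadratic form on $\Lambda^2\R^4$, Pl\"ucker-corrected or not, represents it.

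You need the intermediate polarization $h=h_C(u,v)$. The defect $d_C(u)^{\mathrm T}R_Cd_C(v)-h^{\mathrm T}R_Ch$ vanishes at $v=u$ because $h_C(u,u)=d_C(u)$. It is in fact the form $\omega^{\mathrm T}E\omega$ in $\omega=u\wedge v$, and this is what receives the correction $\tau J$ and the ten-square certificate. The remaining piece is nonnegative by centering:
\[
h^{\mathrm T}R_Ch-\tfrac14\gamma^2=(h-\tfrac\gamma4\one)^{\mathrm T}R_C(h-\tfrac\gamma4\one),
\]
where $\gamma=u^{\mathrm T}Cv=\one^{\mathrm T}h$ and $R_C\one=\one$.

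A smaller point: normalizing $C$ to unit diagonal puts square roots into its Cholesky factor. The paper instead scales so that the Cholesky factor is unit lower triangular. This makes $C$, $C^{-1}$ and $\det(K_C)R_C$ polynomial in six free parameters, which the rational certificate requires.
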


The proof combines a four-dimensional inequality with two applications
of Lemma~\ref{lem:border}.

\subsection{The four-dimensional inequality}\label{subsec:four}

\begin{proposition}\label{prop:four}
Let $C\in\Spp{4}$. For every $u,v\in\R^4$,
\begin{equation}\label{eq:four-strong}
 d_C(u)^{\mathrm{T}}R_Cd_C(v)
 \geq h_C(u,v)^{\mathrm{T}}R_Ch_C(u,v)
 \geq\frac14(u^{\mathrm{T}}Cv)^2.
\end{equation}
Consequently,
\begin{equation}\label{eq:four-map}
 \Phi_C(Y)\succeq\frac14CYC\qquad(Y\succeq0).
\end{equation}
\end{proposition}

\begin{proof}
Write $R=R_C$ and $h=h_C(u,v)$. If $\gamma=u^{\mathrm{T}}Cv$, then
$\one^{\mathrm{T}}h=\gamma$ and $R\one=\one$. Therefore
\begin{equation}\label{eq:centering}
 h^{\mathrm{T}}Rh-\frac{\gamma^2}{4}
 =\left(h-\frac\gamma4\one\right)^{\mathrm{T}}
 R\left(h-\frac\gamma4\one\right)\geq0.
\end{equation}
It remains to prove the first inequality.

Use the basis $(e_i\wedge e_j)_{i<j}$ of $\Lambda^2\R^4$, ordered as
\[
 \mathcal P=(12,13,14,23,24,34),
\]
and put $\omega=u\wedge v$, so
$\omega_{ij}=u_iv_j-u_jv_i$. Define a symmetric
matrix $E$, indexed by $\mathcal P$, by
\begin{equation}\label{eq:E}
 4E_{ij,kl}=C_{ik}C_{jl}
 (R_{ij}+R_{il}+R_{kj}+R_{kl})-C_{il}C_{jk}(R_{ij}+R_{ik}+R_{lj}+R_{lk}).
\end{equation}
The polarization defect satisfies
\begin{equation}\label{eq:exterior-identity}
 d_C(u)^{\mathrm{T}}Rd_C(v)-h_C(u,v)^{\mathrm{T}}Rh_C(u,v)
 =\omega^{\mathrm{T}}E\omega.
\end{equation}
To see this, let $A_s=(\Diag(e_s)C+C\Diag(e_s))/2$. Then
$[d_C(u)]_s=u^{\mathrm{T}}A_su$ and $[h_C(u,v)]_s=u^{\mathrm{T}}A_sv$.
For symmetric matrices $H,F$, define
\[
 \mathcal C(H,F)_{ij,kl}
 =\frac12\bigl(H_{ik}F_{jl}+F_{ik}H_{jl}
                -H_{il}F_{jk}-F_{il}H_{jk}\bigr).
\]
Expansion gives the polarized identity
\begin{equation*}
 \omega^{\mathrm{T}}\mathcal C(H,F)\omega
 =\frac12\bigl[(u^{\mathrm{T}}Hu)(v^{\mathrm{T}}Fv)+(u^{\mathrm{T}}Fu)(v^{\mathrm{T}}Hv)\bigr]-(u^{\mathrm{T}}Hv)(u^{\mathrm{T}}Fv).
\end{equation*}
Apply this identity with $H=A_s$, $F=A_t$, and sum against $R_{st}$.
Since $R$ is symmetric, the resulting matrix has entries
\[
 \sum_{s,t}R_{st}
 \bigl((A_s)_{ik}(A_t)_{jl}-(A_s)_{il}(A_t)_{jk}\bigr),
\]
which equals \eqref{eq:E} and proves \eqref{eq:exterior-identity}.

Let $J$ be the symmetric matrix whose only nonzero entries are
\[
 J_{12,34}=J_{34,12}=1,\quad
 J_{13,24}=J_{24,13}=-1,\quad
 J_{14,23}=J_{23,14}=1.
\]
The Pl\"ucker relation gives
\begin{equation}\label{eq:plucker}
 \omega^{\mathrm{T}}J\omega
 =2(\omega_{12}\omega_{34}-\omega_{13}\omega_{24}
    +\omega_{14}\omega_{23})=0.
\end{equation}
Set
\[
 s_1=R_{12}+R_{34},\qquad s_2=R_{13}+R_{24},\qquad
 s_3=R_{14}+R_{23},
\]
and define
\begin{equation}\label{eq:tau}
 \tau=\frac14\bigl[
 C_{12}C_{34}(s_3-s_2)+C_{13}C_{24}(s_1-s_3)
 +C_{14}C_{23}(s_2-s_1)\bigr].
\end{equation}
We next prove $E+\tau J\succeq0$ after normalizing $C$.

Every $C\in\Spp{4}$ has a factorization $C=D C_0D$, where $D$ is
positive diagonal and $C_0=LL^{\mathrm{T}}$ with $L$ unit lower triangular.
Indeed, if $C=TT^{\mathrm{T}}$ is its Cholesky factorization, take
$D=\Diag(T_{11},\ldots,T_{44})$ and $L=D^{-1}T$.
In addition to \eqref{eq:scaling}, we have
\[
 h_C(u,v)=h_{C_0}(Du,Dv),\qquad u^{\mathrm{T}}Cv=(Du)^{\mathrm{T}}C_0(Dv).
\]
Thus it suffices to prove \eqref{eq:four-strong} for
\begin{equation}\label{eq:L}
 C=LL^{\mathrm{T}},\qquad
 L=\begin{pmatrix}
 1&0&0&0\\a&1&0&0\\b&c&1&0\\d&e&f&1
 \end{pmatrix},\qquad a,b,c,d,e,f\in\R.
\end{equation}
Let $B_2=\Lambda^2(L^{-1})$ be the map induced by $L^{-1}$ on the
second exterior power, characterized by
\[
 B_2(x\wedge y)=(L^{-1}x)\wedge(L^{-1}y).
\]
In the basis $\mathcal P$, its entries are the $2\times2$ minors
\[
 (B_2)_{ij,kl}=(L^{-1})_{ik}(L^{-1})_{jl}
 -(L^{-1})_{il}(L^{-1})_{jk}.
\]
Define
\begin{equation}\label{eq:W}
 \mathcal W=4\det(K_C)B_2(E+\tau J)B_2^{\mathrm{T}}.
\end{equation}
This is a polynomial matrix in $a,b,c,d,e,f$: the matrices $L^{-1}$
and $C^{-1}$ have polynomial entries, and denominators involving $R$
are removed by $\det(K_C)R=\adj(K_C)$.

The explicit polynomial vectors $p_1,\ldots,p_{10}$ in
Appendix~\ref{subsec:coefficients} satisfy
\begin{equation}\label{eq:certificate}
 \mathcal W=\sum_{\ell=1}^{8}p_\ell p_\ell^{\mathrm{T}}
 +2p_9p_9^{\mathrm{T}}+2p_{10}p_{10}^{\mathrm{T}}.
\end{equation}
The accompanying verifier checks all 36 entries of this identity in
$\Q[a,b,c,d,e,f]$, constructing the left-hand side directly from
\eqref{eq:E}, \eqref{eq:tau}, and \eqref{eq:W}.

Since $\det(K_C)>0$ and $B_2$ is invertible,
\eqref{eq:certificate} implies $E+\tau J\succeq0$.
More explicitly, for $\xi=B_2^{-\mathrm{T}}\omega$, equations
\eqref{eq:exterior-identity} and \eqref{eq:plucker} give
\begin{align*}
 d_C(u)^{\mathrm{T}}Rd_C(v)-h^{\mathrm{T}}Rh
 &=\omega^{\mathrm{T}}(E+\tau J)\omega\\
 &=\frac{1}{4\det(K_C)}
 \left(\sum_{\ell=1}^8(p_\ell^{\mathrm{T}}\xi)^2
       +2(p_9^{\mathrm{T}}\xi)^2+2(p_{10}^{\mathrm{T}}\xi)^2\right)\geq0.
\end{align*}
Together with \eqref{eq:centering}, this proves
\eqref{eq:four-strong}. Equation~\eqref{eq:four-map} follows from
\eqref{eq:Phi-rankone} and linearity.
\end{proof}

\begin{remark}
The constant $1/4$ in \eqref{eq:four-strong} is optimal. For $C=I_4$
and $u=v=\one$, the left side is $4$ and $(u^{\mathrm{T}}Cv)^2=16$.
\end{remark}

\subsection{Quadratic positivity in order five}\label{subsec:five}

\begin{proposition}\label{prop:five}
For every $A\in\Spp{5}$ and every $w\in\R^5$,
\begin{equation}\label{eq:five}
 R_A d_A(w)\geq0\qquad\text{entrywise}.
\end{equation}
\end{proposition}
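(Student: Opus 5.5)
By the permutation rule \eqref{eq:permutation}, it suffices to prove that the first coordinate of $R_Ad_A(w)$ is nonnegative. By the diagonal scaling rule \eqref{eq:scaling}, with $u\mapsto Du$ bijective, we may assume $A=\mathcal B(C,x)$ with $C\in\Spp{4}$ and $x\in\R^4$. Write $w=(w_0,w')$ with $w_0\in\R$, $w'\in\R^4$, and put $y=w_0x+w'$. Then
\[
Aw=\bigl(w_0(1+x^{\mathrm T}Cx)+x^{\mathrm T}Cw',\ Cy\bigr).
\]
Lemma~\ref{lem:border} gives $e_1^{\mathrm T}R_A=\sigma^{-1}(1,t^{\mathrm T})$ with $\sigma>0$. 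Hence
\[
\sigma\,e_1^{\mathrm T}R_Ad_A(w)=w_0^2(1+x^{\mathrm T}Cx)+w_0\,x^{\mathrm T}Cw'+t^{\mathrm T}\bigl(w'\circ Cy\bigr).
\]
Substitute $w'=y-w_0x$ to split the last term as
\[
t^{\mathrm T}(w'\circ Cy)=t^{\mathrm T}d_C(y)-w_0\,t^{\mathrm T}(x\circ Cy).
\]
The result is a quadratic form $Q(w_0,y)$ on $\R\times\R^4$, and we must show $Q\geq 0$.

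The key input is the representation $t=R_C\diag(CX)$ with $X\succeq0$ from \eqref{eq:psd-representation}. Together with \eqref{eq:Phi-rankone}, it gives
\[
t^{\mathrm T}d_C(y)=\diag(CX)^{\mathrm T}R_Cd_C(y)=y^{\mathrm T}\Phi_C(X)y .
\]
Proposition~\ref{prop:four}, in the form \eqref{eq:four-map}, then yields
\[
t^{\mathrm T}d_C(y)\geq\tfrac14\,y^{\mathrm T}CXCy .
\]
Using the explicit form $X=zz^{\mathrm T}+\Diag(t)C^{-1}\Diag(t)$, this lower bound equals
\[
\tfrac14\Bigl[(z^{\mathrm T}Cy)^2+(t\circ Cy)^{\mathrm T}C^{-1}(t\circ Cy)\Bigr].
\]
So $Q(w_0,y)$ is bounded below by an explicit quadratic in the variables $w_0$, $\alpha=z^{\mathrm T}Cy$ and $\beta=t\circ Cy$. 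The diagonal parts are $w_0^2(1+x^{\mathrm T}Cx)$ and $\frac14(\alpha^2+\beta^{\mathrm T}C^{-1}\beta)$. The cross terms are
\[
w_0\bigl(x^{\mathrm T}Cy-x^{\mathrm T}Cx\,w_0\bigr)-w_0\,x^{\mathrm T}\beta .
\]

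The remaining step, which I expect to be the main obstacle, is to show that this lower bound is nonnegative by completing squares in $w_0$. One must rewrite the linear form $x^{\mathrm T}Cy$ in terms of $\alpha$, $\beta$ and a remainder, using the identities of Lemma~\ref{lem:border}: $b+t=\one$, $z=\Diag(x)b$, $t=N^{-1}d$, $Kt=\Diag(x)C\Diag(x)b$, and $\sigma=1+\one^{\mathrm T}t$. For instance,
\[
x^{\mathrm T}Cy=\one^{\mathrm T}\bigl(x\circ Cy\bigr)=z^{\mathrm T}Cy+x^{\mathrm T}(t\circ Cy)=\alpha+x^{\mathrm T}\beta,
\]
since $x=\Diag(x)(b+t)$. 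With this, the cross terms collapse to
\[
w_0\alpha-(x^{\mathrm T}Cx)\,w_0^2 .
\]
The lower bound then becomes
\[
w_0^2+w_0\alpha+\tfrac14\alpha^2+\tfrac14\beta^{\mathrm T}C^{-1}\beta=\bigl(w_0+\tfrac12\alpha\bigr)^2+\tfrac14\beta^{\mathrm T}C^{-1}\beta\geq0 .
\]
I would verify this bookkeeping carefully, since the constant $\tfrac14$ from Proposition~\ref{prop:four} is exactly what makes the square complete. Dividing by $\sigma>0$ finishes the proof for the first coordinate, and permutation gives all coordinates.
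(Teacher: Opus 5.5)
Your proof is correct and is essentially the paper's argument. It uses the same reduction to $A=\mathcal B(C,x)$, the same bound $t^{\mathrm T}d_C(y)=y^{\mathrm T}\Phi_C(X)y\geq\tfrac14 y^{\mathrm T}CXCy$ from Proposition~\ref{prop:four}, and the same completed square $\bigl(w_0+\tfrac12 z^{\mathrm T}Cy\bigr)^2+\tfrac14(t\circ Cy)^{\mathrm T}C^{-1}(t\circ Cy)$; your identity $x^{\mathrm T}Cy=z^{\mathrm T}Cy+x^{\mathrm T}(t\circ Cy)$ plays the role of the paper's $x-\Diag(x)t=z$, and the bookkeeping you flagged checks out.
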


\begin{proof}
By permutation invariance, it suffices to prove nonnegativity of the
first coordinate. By \eqref{eq:scaling} and the normalization in
Section~\ref{sec:preliminaries}, we may assume
$A=\mathcal B(C,x)$ with $C\in\Spp{4}$. Use the notation of
Lemma~\ref{lem:border}, and set
\[
 S=\sym(\Diag(t)C),\qquad r=Cz.
\]
Equation~\eqref{eq:psd-representation} gives $S=\Phi_C(X)$. Hence
Proposition~\ref{prop:four} implies
\begin{equation}\label{eq:finite-bound}
 Q\coloneqq S-\frac14rr^{\mathrm{T}}
 \succeq\frac14C\Diag(t)C^{-1}\Diag(t)C\succeq0.
\end{equation}

Write an arbitrary $w\in\R^5$ as
\[
 w=\begin{pmatrix}\alpha\\y-\alpha x\end{pmatrix}.
\]
Then
\[
 Aw=\begin{pmatrix}\alpha+x^{\mathrm{T}}Cy\\Cy\end{pmatrix}.
\]
Using \eqref{eq:first-row} and
$x-\Diag(x)t=\Diag(x)(\one-t)=\Diag(x)b=z$, we obtain
\begin{align}
 \sigma[R_A d_A(w)]_1
 &=\alpha(\alpha+x^{\mathrm{T}}Cy)+t^{\mathrm{T}}\Diag(y-\alpha x)Cy\notag\\
 &=\alpha^2+\alpha z^{\mathrm{T}}Cy+y^{\mathrm{T}}\Diag(t)Cy\notag\\
 &=\left(\alpha+\frac12r^{\mathrm{T}}y\right)^2+y^{\mathrm{T}}Qy\geq0.
 \label{eq:complete-square}
\end{align}
Since $\sigma>0$, this proves \eqref{eq:five}.
\end{proof}

\subsection{Proof of the main theorem and sharpness}\label{subsec:six}

\begin{proof}[Proof of Theorem~\ref{thm:main}]
Fix a distinguished coordinate of $G$. By
\eqref{eq:scaling}--\eqref{eq:permutation}, we may take it to be the
first and assume $G=\mathcal B(C,x)$ with $C\in\Spp{5}$.
Lemma~\ref{lem:border} gives
\[
 e_1^{\mathrm{T}}R_G=\sigma^{-1}(1,t^{\mathrm{T}}),\qquad
 t=R_C\diag(CX),\qquad X\succeq0,\quad\sigma>0.
\]
Choose a rank-one decomposition $X=\sum_{\ell=1}^r u_\ell u_\ell^{\mathrm{T}}$.
By Proposition~\ref{prop:five},
\[
 t=\sum_{\ell=1}^r R_C\diag(Cu_\ell u_\ell^{\mathrm{T}})
 =\sum_{\ell=1}^r R_Cd_C(u_\ell)\geq0
 \qquad\text{entrywise}.
\]
Thus every row of $R_G$ is nonnegative. Positive definiteness and
unit row and column sums follow from \eqref{eq:notation}--\eqref{eq:rowsum}.
\end{proof}

\begin{corollary}\label{cor:smaller}
For $1\leq n\leq6$ and $A\in\Spp{n}$, the matrix $R_A$ is positive
definite and doubly stochastic.
\end{corollary}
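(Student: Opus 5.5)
The plan is to reduce to Theorem~\ref{thm:main} by adjoining an identity block, as the introduction indicates. Given $A\in\Spp{n}$ with $1\le n\le 6$, set
\[
 G=\begin{pmatrix}A&0\\0&I_{6-n}\end{pmatrix}\in\Spp{6}.
\]
Inversion respects the block-diagonal structure, so $G^{-1}=\Diag$-blockwise $(A^{-1},I_{6-n})$. The Hadamard product acts blockwise as well, which gives
\[
 K_G=G\circ G^{-1}=\begin{pmatrix}K_A&0\\0&I_{6-n}\circ I_{6-n}\end{pmatrix}
 =\begin{pmatrix}K_A&0\\0&I_{6-n}\end{pmatrix}.
\]
The off-diagonal blocks are zero Hadamard zero, and $I\circ I=I$. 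Inverting then yields $R_G=\begin{pmatrix}R_A&0\\0&I_{6-n}\end{pmatrix}$. The case $n=6$ needs no border at all.

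With this block form, Theorem~\ref{thm:main} says $R_G\ge0$ entrywise. Since $R_A$ is a principal submatrix of $R_G$, it is also entrywise nonnegative.

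The remaining properties follow from facts already established. Positive definiteness of $R_A$ comes from the Schur product theorem, as recorded after \eqref{eq:notation}. The unit row and column sums come from \eqref{eq:rowsum}, and $R_A$ is symmetric, so row sums equal column sums. A nonnegative matrix with unit row and column sums is doubly stochastic.

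I do not expect a genuine obstacle here. The only care needed is the bookkeeping check that the Hadamard product and the inverse both preserve the block-diagonal structure, and that $I\circ I^{-1}=I$.
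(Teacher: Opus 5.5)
Your proposal is correct and is essentially the paper's own proof. Both set $G=A\oplus I_{6-n}$, observe $R_G=R_A\oplus I_{6-n}$, apply Theorem~\ref{thm:main}, and read off entrywise nonnegativity from the upper-left block; positive definiteness and the unit row and column sums come from the Schur product theorem and \eqref{eq:rowsum}.
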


\begin{proof}
The case $n=6$ is Theorem~\ref{thm:main}. For $n<6$, apply it to
$G=A\oplus I_{6-n}$. Then $R_G=R_A\oplus I_{6-n}$, and the result
follows by taking the upper-left block.
\end{proof}

\begin{example}\label{ex:seven}
The dimension bound is sharp. Let
\begin{equation}\label{eq:G7}
 G_7=\begin{pmatrix}
 21&3&4\one^{\mathrm{T}}\\
 3&21&4\one^{\mathrm{T}}\\
 4\one&4\one&16I_5
 \end{pmatrix},\qquad \one\in\R^5.
\end{equation}
Its Schur complement over $16I_5$ is
$\left(\begin{smallmatrix}16&-2\\-2&16\end{smallmatrix}\right)$,
which is positive definite. Block inversion and the Hadamard product
give
\[
 K_{G_7}=
 \begin{pmatrix}
 4/3&1/42&-\one^{\mathrm{T}}/14\\
 1/42&4/3&-\one^{\mathrm{T}}/14\\
 -\one/14&-\one/14&(8/7)I_5
 \end{pmatrix}.
\]
Its Schur complement over $(8/7)I_5$ is
\[
 H=\begin{pmatrix}881/672&1/672\\1/672&881/672\end{pmatrix}.
\]
The upper-left block of $R_{G_7}$ is $H^{-1}$, so
\begin{equation}\label{eq:negative-entry}
 (R_{G_7})_{12}=-\frac{1}{1155}<0.
\end{equation}
Adjoining an identity block gives a counterexample in every order
greater than seven.
\end{example}

\subsection{An equivalence in consecutive dimensions}\label{subsec:equivalence}

The bordering arguments are reversible in every dimension.

\begin{theorem}\label{thm:equivalence}
For an integer $m\geq1$, the following assertions are equivalent:
\begin{enumerate}[label=\textup{(\roman*)},leftmargin=*]
\item\label{it:W}
For all $C\in\Spp{m}$ and $u,v\in\R^m$,
$d_C(u)^{\mathrm{T}}R_Cd_C(v)\geq\tfrac14(u^{\mathrm{T}}Cv)^2$.
\item\label{it:P}
For all $A\in\Spp{m+1}$ and $w\in\R^{m+1}$,
$R_A d_A(w)\geq0$ entrywise.
\item\label{it:I}
For all $G\in\Spp{m+2}$, $R_G\geq0$ entrywise.
\end{enumerate}
\end{theorem}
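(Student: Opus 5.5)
The plan is to prove the cycle (i)$\Rightarrow$(ii)$\Rightarrow$(iii) by running the two bordering arguments of Propositions~\ref{prop:five} and the proof of Theorem~\ref{thm:main} in general dimension. Then I close the loop with (iii)$\Rightarrow$(ii) and (ii)$\Rightarrow$(i), via a small-border perturbation of Lemma~\ref{lem:border}.

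\emph{Forward directions.} For (i)$\Rightarrow$(ii), note first that (i) is, by \eqref{eq:Phi-rankone}--\eqref{eq:Phi-bound}, the same as $\Phi_C(Y)\succeq\frac14CYC$ for all $Y\succeq0$ and $C\in\Spp{m}$. This is the only input from Proposition~\ref{prop:four} used in the proof of Proposition~\ref{prop:five}. So that proof carries over verbatim with $C\in\Spp{m}$. With $S=\Phi_C(X)$ and $r=Cz$, we get $Q=S-\frac14 rr^{\mathrm T}\succeq\frac14 C(X-zz^{\mathrm T})C\succeq0$. The completed square \eqref{eq:complete-square} then gives $[R_Ad_A(w)]_1\ge0$, and permutation invariance handles the other coordinates. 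For (ii)$\Rightarrow$(iii), I copy the proof of Theorem~\ref{thm:main} with $C\in\Spp{m+1}$. Write $t=\sum_\ell R_Cd_C(u_\ell)$ from a rank-one decomposition of $X$. Then (ii) gives $t\ge0$, so $e_1^{\mathrm T}R_G=\sigma^{-1}(1,t^{\mathrm T})\ge0$.

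\emph{(iii)$\Rightarrow$(ii).} Fix $A\in\Spp{m+1}$ and $w$, and border with $x=\varepsilon w$, setting $G=\mathcal B(A,\varepsilon w)\in\Spp{m+2}$. In Lemma~\ref{lem:border} we have $N=K_A+\varepsilon^2\Diag(w)A\Diag(w)$. By \eqref{eq:t-identities}, $t=N^{-1}d_A(\varepsilon w)=\varepsilon^2N^{-1}d_A(w)$. Hypothesis (iii) together with $\sigma>0$ gives $t\ge0$, hence $N^{-1}d_A(w)\ge0$ for every $\varepsilon\neq0$. Letting $\varepsilon\to0$, and using continuity of $N^{-1}\to R_A$, yields $R_Ad_A(w)\ge0$ entrywise.

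\emph{(ii)$\Rightarrow$(i).} This is the step I expect to need the most care, since an entrywise statement must be converted into a quadratic bound. Fix $C\in\Spp{m}$ and $u,v$, and take $A=\mathcal B(C,\varepsilon u)$. The computation \eqref{eq:complete-square} is an exact identity, with no use of Proposition~\ref{prop:four}. Combined with (ii) it gives
\[
 0\le\sigma[R_Ad_A(w)]_1=\alpha^2+\alpha\,z^{\mathrm T}Cy+y^{\mathrm T}\Diag(t)Cy
 \qquad(\alpha\in\R,\ y\in\R^m).
\]
Here $t=\varepsilon^2N^{-1}d_C(u)=\varepsilon^2R_Cd_C(u)+O(\varepsilon^4)$ and $z=\Diag(\varepsilon u)(\one-t)=\varepsilon u+O(\varepsilon^3)$. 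Substitute $\alpha=\varepsilon\beta$ and $y=v$, divide by $\varepsilon^2$, and let $\varepsilon\to0$. This gives
\[
 \beta^2+\beta\,u^{\mathrm T}Cv+v^{\mathrm T}\Diag(R_Cd_C(u))Cv\ge0
 \qquad(\beta\in\R).
\]
Minimizing over $\beta$ gives $v^{\mathrm T}\Diag(R_Cd_C(u))Cv\ge\frac14(u^{\mathrm T}Cv)^2$. The left side equals $d_C(v)^{\mathrm T}R_Cd_C(u)$, which is (i).

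The points to check are the error orders, so that the $\varepsilon^2$ terms are exactly the limit displayed, and that no sign assumption on $b$ or $t$ is needed (Remark~\ref{rem:no-sign}).
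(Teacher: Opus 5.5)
Your proposal is correct and follows the paper's proof essentially step for step. The forward implications rerun the arguments of Proposition~\ref{prop:five} and Theorem~\ref{thm:main} in general dimension, and the reverse implications use the same small borders $\mathcal B(A,\varepsilon w)$ and $\mathcal B(C,\varepsilon u)$, with your substitution $\alpha=\varepsilon\beta$, $y=v$ reproducing exactly the paper's test vector $w_\varepsilon=(\varepsilon\alpha,\,v-\varepsilon^2\alpha u)$ and the same error orders $t=\varepsilon^2R_Cd_C(u)+O(\varepsilon^4)$, $z=\varepsilon u+O(\varepsilon^3)$.
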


\begin{proof}
The argument of Proposition~\ref{prop:five}, using
\eqref{eq:Phi-bound} with $\eta=1/4$, proves
\ref{it:W}$\Rightarrow$\ref{it:P} in any dimension.
The argument of Theorem~\ref{thm:main} gives
\ref{it:P}$\Rightarrow$\ref{it:I} in any dimension.

Assume \ref{it:I}. Fix $A\in\Spp{m+1}$ and $w\in\R^{m+1}$, and
consider $\mathcal B(A,\varepsilon w)$. In Lemma~\ref{lem:border}, its vector
$t_\varepsilon$ satisfies
\[
 t_\varepsilon
 =\varepsilon^2(K_A+\varepsilon^2\Diag(w)A\Diag(w))^{-1}d_A(w).
\]
By \eqref{eq:first-row} and \ref{it:I}, this vector is entrywise
nonnegative. Divide by $\varepsilon^2$ and let $\varepsilon\to0$ to
obtain \ref{it:P}.

Finally, assume \ref{it:P}. Fix $C\in\Spp{m}$ and $u,v\in\R^m$.
Put $A_\varepsilon=\mathcal B(C,\varepsilon u)$, and use the subscript
$\varepsilon$ for the quantities in Lemma~\ref{lem:border}. Since
inversion is analytic near the positive definite matrix $K_C$,
\[
 t_\varepsilon=\varepsilon^2R_Cd_C(u)+O(\varepsilon^4),\qquad
 b_\varepsilon=\one+O(\varepsilon^2),\qquad
 z_\varepsilon=\varepsilon u+O(\varepsilon^3).
\]
For $\alpha\in\R$, take
\[
 w_\varepsilon=
 \begin{pmatrix}\varepsilon\alpha\\v-\varepsilon^2\alpha u\end{pmatrix}.
\]
The identity in \eqref{eq:complete-square}, with $y=v$, gives
\begin{align*}
 0\leq \sigma_\varepsilon
 [R_{A_\varepsilon}d_{A_\varepsilon}(w_\varepsilon)]_1
 =\varepsilon^2\bigl[
 \alpha^2+\alpha u^{\mathrm{T}}Cv+d_C(u)^{\mathrm{T}}R_Cd_C(v)\bigr]
 +O(\varepsilon^4).
\end{align*}
Here $C,u,v,\alpha$ are fixed while $\varepsilon\to0$ through nonzero
real values, and $A_\varepsilon\succ0$ by \eqref{eq:border}.
Dividing by $\varepsilon^2$, taking the limit, and choosing
$\alpha=-u^{\mathrm{T}}Cv/2$ proves \ref{it:W}.
\end{proof}

\begin{remark}\label{rem:cutoff}
At $C=I_m$ and $u=v=\one$, condition~\ref{it:W} requires
$m\geq m^2/4$. Thus none of the equivalent assertions can hold for
$m\geq5$. Proposition~\ref{prop:four}, together with
Theorem~\ref{thm:equivalence} and Corollary~\ref{cor:smaller}, shows
that they hold precisely for $1\leq m\leq4$.
\end{remark}

\section{Applications}\label{sec:applications}

We describe the averaging properties of the inverse RGA
and derive a reverse Schur--Horn inequality.

For $x,y\in\R^n$, write $x\prec y$ if
\[
 \sum_{i=1}^k x_i^{\downarrow}\leq
 \sum_{i=1}^k y_i^{\downarrow}\quad(1\leq k<n),\qquad
 \sum_{i=1}^n x_i=\sum_{i=1}^n y_i,
\]
where the downward arrow denotes decreasing rearrangement. Equivalently,
$x=Ty$ for some doubly stochastic matrix $T$. We write $\mathscr B_n$
for the Birkhoff polytope of doubly stochastic matrices of order $n$.
Its elements are precisely convex combinations of permutation matrices.
These classical characterizations are recalled in \cite{Horn1954}.

\subsection{Structure of the inverse interaction operator}

The following identity identifies the equality space in Fiedler's
inequality $K_G\succeq I$; see Johnson \cite{Johnson1978}.
Define the undirected graph
$\mathcal G(G)$ on $\{1,\ldots,n\}$ by joining distinct $i,j$ whenever
$G_{ij}\ne0$. Let its connected components be $V_1,\ldots,V_r$, and set
\[
 \mathcal F_G=\{x\in\R^n:x\text{ is constant on each }V_a\},\qquad
 \Pi_G=\sum_{a=1}^r\frac{\one_{V_a}\one_{V_a}^{\mathrm{T}}}{|V_a|},
\]
where $\one_{V_a}$ is the indicator vector of $V_a$. Thus $\Pi_G$ is
the orthogonal projection onto $\mathcal F_G$.

\begin{lemma}\label{lem:interaction-energy}
For every $G\in\Spp{n}$ and $x\in\R^n$,
\begin{equation}\label{eq:interaction-energy}
 x^{\mathrm{T}}(K_G-I)x=\frac12
 \bigl\|G^{1/2}\Diag(x)G^{-1/2}-G^{-1/2}\Diag(x)G^{1/2}\bigr\|_F^2.
\end{equation}
Consequently,
\begin{equation}\label{eq:interaction-kernel}
 K_G\succeq I,\qquad 0\prec R_G\preceq I,\qquad
 \ker(K_G-I)=\ker(I-R_G)=\mathcal F_G.
\end{equation}
\end{lemma}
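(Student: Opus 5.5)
Write $D=\Diag(x)$ and $M=G^{1/2}DG^{-1/2}$, so that $M^{\mathrm T}=G^{-1/2}DG^{1/2}$ and the right side of \eqref{eq:interaction-energy} is $\tfrac12\|M-M^{\mathrm T}\|_F^2$. Expanding the square gives
\[
 \tfrac12\|M-M^{\mathrm T}\|_F^2=\|M\|_F^2-\operatorname{tr}(M^2).
\]
We evaluate the two terms separately. By cyclicity, $\operatorname{tr}(M^2)=\operatorname{tr}(G^{1/2}D^2G^{-1/2})=\operatorname{tr}(D^2)=x^{\mathrm T}x$. For the other term,
\[
 \|M\|_F^2=\operatorname{tr}(MM^{\mathrm T})=\operatorname{tr}(G^{1/2}DG^{-1}DG^{1/2})=\operatorname{tr}(GDG^{-1}D).
\]
Written out in coordinates, this is $\sum_{i,j}G_{ij}x_j(G^{-1})_{ji}x_i=x^{\mathrm T}(G\circ G^{-1})x=x^{\mathrm T}K_Gx$, using the symmetry of $G^{-1}$. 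Subtracting the two evaluations yields \eqref{eq:interaction-energy}.

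For the consequences, the identity shows that $K_G-I\succeq0$. Together with $K_G\succ0$, this gives $0\prec R_G=K_G^{-1}\preceq I$, since inversion reverses the Loewner order on positive definite matrices. Moreover, $K_G$ and $R_G$ commute and are simultaneously diagonalizable, and an eigenvalue of $K_G$ equals $1$ exactly when the corresponding eigenvalue of $R_G$ equals $1$. Hence $\ker(K_G-I)=\ker(I-R_G)$.

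It remains to identify this kernel with $\mathcal F_G$, which is the main step. Because $K_G-I\succeq0$, we have $x\in\ker(K_G-I)$ if and only if $x^{\mathrm T}(K_G-I)x=0$. By \eqref{eq:interaction-energy}, this happens exactly when $M=M^{\mathrm T}$, that is, when $G^{1/2}DG^{-1/2}=G^{-1/2}DG^{1/2}$. Multiplying on the left by $G^{1/2}$ and on the right by $G^{1/2}$ turns this into $GD=DG$. Comparing $(i,j)$ entries, the commutation condition reads $G_{ij}(x_j-x_i)=0$ for all $i,j$. This says $x_i=x_j$ whenever $G_{ij}\neq0$. By following paths in $\mathcal G(G)$, that is equivalent to $x$ being constant on each connected component $V_a$, i.e.\ $x\in\mathcal F_G$. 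The only care needed is to note that each step in this chain is reversible, so both inclusions hold.
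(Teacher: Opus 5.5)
Your proof is correct and follows the paper's argument essentially step for step: the same matrix $G^{1/2}\Diag(x)G^{-1/2}$, the same cyclicity evaluation of $\|M\|_F^2$ and $\operatorname{tr}(M^2)$, and the same reduction of the kernel to the commutation $G\Diag(x)=\Diag(x)G$. The only cosmetic difference is that you get $\ker(K_G-I)=\ker(I-R_G)$ by simultaneous diagonalization, whereas the paper reads it off from the factorization $I-R_G=R_G(K_G-I)$.
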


\begin{proof}
Put $A=G^{1/2}\Diag(x)G^{-1/2}$. Cyclicity of the trace gives
\[
 \|A\|_F^2=\operatorname{tr}(\Diag(x)G\Diag(x)G^{-1})=x^{\mathrm{T}}K_Gx,
 \qquad \operatorname{tr}(A^2)=\operatorname{tr}(\Diag(x)^2)=x^{\mathrm{T}}x.
\]
Now $\|A-A^{\mathrm{T}}\|_F^2=2\|A\|_F^2-2\operatorname{tr}(A^2)$,
which proves \eqref{eq:interaction-energy}. Its right-hand side
vanishes exactly when $G\Diag(x)=\Diag(x)G$, or equivalently
$G_{ij}(x_i-x_j)=0$ for every $i,j$. This is precisely the condition
$x\in\mathcal F_G$. Inversion gives $0\prec R_G\preceq I$, and
$I-R_G=R_G(K_G-I)$ identifies the two kernels.
\end{proof}

\begin{corollary}[Spectral convergence]\label{cor:rga-convergence}
For every $G\in\Spp{n}$, set $\rho=\|R_G-\Pi_G\|_2$. Then
$\rho<1$ and
\begin{equation}\label{eq:interaction-limit}
 R_G^k\longrightarrow\Pi_G,\qquad
 \|R_G^ky-\Pi_Gy\|_2\leq\rho^k\|y-\Pi_Gy\|_2
 \quad(y\in\R^n,\ k\geq1).
\end{equation}
\end{corollary}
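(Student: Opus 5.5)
The plan is to use that $R_G$ is symmetric and commutes with the orthogonal projection $\Pi_G$, and then to bound the spectrum of $R_G$ on the orthogonal complement of $\mathcal F_G$. Lemma~\ref{lem:interaction-energy} gives $0\prec R_G\preceq I$ and $\ker(I-R_G)=\mathcal F_G$.

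\emph{First step: $R_G$ commutes with $\Pi_G$.} Take $x\in\mathcal F_G$. Then $x$ is an eigenvector of $R_G$ with eigenvalue $1$, so $R_Gx=x$. Since $R_G$ is symmetric, the orthogonal complement $\mathcal F_G^\perp$ is also $R_G$-invariant. Hence $R_G\Pi_G=\Pi_GR_G=\Pi_G$. Because $\Pi_G$ is an idempotent commuting with $R_G$, we get
\[
 R_G^k-\Pi_G=(R_G-\Pi_G)^k .
\]
This identity can be checked by induction, or read off the block decomposition $\R^n=\mathcal F_G\oplus\mathcal F_G^\perp$. In that decomposition $R_G-\Pi_G$ acts as $0$ on $\mathcal F_G$ and as $R_G$ restricted to $\mathcal F_G^\perp$.

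\emph{Second step: $\rho<1$.} The restriction of $R_G$ to $\mathcal F_G^\perp$ is symmetric, and its eigenvalues lie in $(0,1]$ because $0\prec R_G\preceq I$. The value $1$ is excluded there, since every eigenvector for $1$ lies in $\ker(I-R_G)=\mathcal F_G$. So $\rho=\|R_G-\Pi_G\|_2$ equals the largest eigenvalue of this restriction, and it is strictly less than $1$. In the degenerate case $\mathcal F_G^\perp=0$ we have $\rho=0$.

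\emph{Third step: the estimate.} For any $y$, note that $(R_G-\Pi_G)\Pi_G=0$. Therefore
\[
 R_G^ky-\Pi_Gy=(R_G-\Pi_G)^k(y-\Pi_Gy),
\]
and its norm is at most $\rho^k\|y-\Pi_Gy\|_2$. Letting $k\to\infty$ gives $R_G^k\to\Pi_G$.

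No step is a real obstacle. The only point needing care is the strict inequality $\rho<1$, and this is exactly what the kernel identification in Lemma~\ref{lem:interaction-energy} supplies. The whole argument holds for every $n$; the nonnegativity results of Theorem~\ref{thm:main} are not needed.
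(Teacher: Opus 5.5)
Your proposal is correct and follows essentially the paper's argument: both use Lemma~\ref{lem:interaction-energy} to see that $R_G$ is the identity on $\mathcal F_G$ and has spectrum in $(0,1)$ on $\mathcal F_G^\perp$, and then read off the estimate from the orthogonal decomposition $\mathcal F_G\oplus\mathcal F_G^\perp$. Your identity $R_G^k-\Pi_G=(R_G-\Pi_G)^k$ simply makes explicit the spectral-decomposition step that the paper leaves implicit.
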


\begin{proof}
By Lemma~\ref{lem:interaction-energy}, $R_G$ is the identity on
$\mathcal F_G$, while its eigenvalues on $\mathcal F_G^\perp$ lie in
$(0,1)$. The spectral decomposition on
$\mathcal F_G\oplus\mathcal F_G^\perp$ gives
\eqref{eq:interaction-limit}. If $\mathcal F_G=\R^n$, then
$R_G=\Pi_G=I$ and $\rho=0$.
\end{proof}

\begin{theorem}[Structure of inverse RGAs]
\label{thm:rga-structure}
Let $G\in\Spp{n}$, where $1\leq n\leq6$, and put $K=K_G$ and $R=R_G$.
Then the following statements hold.
\begin{enumerate}[label=\textup{(\roman*)},leftmargin=*]
\item\label{it:structure-stochastic}
$K$ is symmetric positive definite, $K\one=\one$, and $K$ is
inverse positive, that is, $K^{-1}\geq0$ entrywise. Moreover,
\begin{equation}\label{eq:structure-stochastic}
 R\in\mathscr B_n\cap\Spp{n},\qquad
 \Pi_G\preceq R\preceq I,\qquad
 R=\sum_{\pi}\theta_\pi P_\pi,
 \quad\theta_\pi\geq0,\quad\sum_\pi\theta_\pi=1,
\end{equation}
where $P_\pi$ are permutation matrices. All eigenvalues of $K$ lie
in $[1,\infty)$ and all eigenvalues of $R$ lie in $(0,1]$.
The multiplicity of the eigenvalue $1$ in either matrix is $r$.

\item\label{it:structure-averaging}
For every $y\in\R^n$, the solution $z$ of $Kz=y$ satisfies
\begin{equation}\label{eq:interaction-average}
 z_i=\sum_jR_{ij}y_j,\qquad
 \min_{j\in V_a}y_j\leq z_i\leq\max_{j\in V_a}y_j
 \quad(i\in V_a),\qquad z\prec y.
\end{equation}
In particular,
\begin{equation}\label{eq:interaction-convex}
 \sum_i\varphi(z_i)\leq\sum_i\varphi(y_i),\qquad
 \|z\|_p\leq\|y\|_p\quad(1\leq p\leq\infty)
\end{equation}
for every real-valued convex function $\varphi$ on an interval
containing the coordinates of $y$. If $\varphi$ is strictly convex,
equality in the convex-sum inequality holds if and only if
$y\in\mathcal F_G$. The same condition characterizes equality in the
norm inequality for $p=2$.

\item\label{it:structure-consensus}
The connected components of the graph with edges $R_{ij}>0$, $i\ne j$,
are exactly $V_1,\ldots,V_r$. Thus $R$ is a reversible Markov
transition matrix with uniform stationary distribution. Under
iteration, the convergence in \eqref{eq:interaction-limit} is averaging
on each component $V_a$. In particular, if $\mathcal G(G)$ is connected,
the limit is $n^{-1}\one\one^{\mathrm{T}}$.
\end{enumerate}
\end{theorem}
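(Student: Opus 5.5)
Almost everything follows by combining Corollary~\ref{cor:smaller} with Lemma~\ref{lem:interaction-energy} and Corollary~\ref{cor:rga-convergence}; the only step needing a genuinely new argument is the identification of the positivity graph of $R$ in part~\ref{it:structure-consensus}.

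For \ref{it:structure-stochastic}, symmetry and positive definiteness of $K$ and the identity $K\one=\one$ come from the Schur product theorem and \eqref{eq:rowsum}. Entrywise nonnegativity of $R=K^{-1}$ is Corollary~\ref{cor:smaller}, so $R\in\mathscr B_n\cap\Spp{n}$. The Birkhoff--von Neumann theorem then gives the permutation decomposition. The bound $R\preceq I$ and the eigenvalue location are in \eqref{eq:interaction-kernel}. For $\Pi_G\preceq R$, diagonalize on $\mathcal F_G\oplus\mathcal F_G^\perp$. The space $\mathcal F_G$ is invariant under the symmetric matrix $R$ and $R=I$ there, so $R=\Pi_G+R'$ with $R'\succeq0$ supported on $\mathcal F_G^\perp$. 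The multiplicity of the eigenvalue $1$ equals $\dim\ker(I-R)=\dim\mathcal F_G=r$, and the same holds for $K$.

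For \ref{it:structure-averaging}, we have $z=Ry$ with $R$ doubly stochastic, so $z\prec y$ and the convex-sum and $\ell^p$ inequalities follow from the classical majorization facts. For the componentwise bounds, note that $R$ commutes with $\Pi_G$, since $\mathcal F_G$ is invariant. More directly, $K_{ij}=G_{ij}(G^{-1})_{ij}=0$ whenever $i,j$ lie in different components, because $G$ and $G^{-1}$ are block diagonal after a permutation grouping the $V_a$. Hence $K$, and therefore $R$, is block diagonal with respect to $V_1,\ldots,V_r$, and each diagonal block of $R$ is doubly stochastic. So $z_i$ is a convex combination of the $y_j$ with $j\in V_a$.

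For the equality case with strictly convex $\varphi$, write $\sum_i\varphi(z_i)\le\sum_i\sum_jR_{ij}\varphi(y_j)=\sum_j\varphi(y_j)$, using Jensen's inequality and column sums equal to one. Equality forces $y_j$ to be constant over $\{j:R_{ij}>0\}$ for each $i$. By \ref{it:structure-consensus}, proved independently below, this makes $y$ constant on each $V_a$. For $p=2$, $\|Ry\|_2=\|y\|_2$ holds with $0\prec R\preceq I$ exactly when $y\in\ker(I-R)=\mathcal F_G$.

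The main point is \ref{it:structure-consensus}: we must show that the graph $\{R_{ij}>0\}$ has exactly the components $V_a$. By block diagonality, no edge joins different $V_a$, so it remains to show that each $V_a$ is connected in this graph. Suppose a block $R_a$ splits as $S\sqcup T$ with $R_{ij}=0$ for $i\in S$, $j\in T$. Because $R_a\ge0$ and $R_a\one=\one$, the indicator $\one_S$ satisfies $R\one_S=\one_S$. Then $\one_S\in\ker(I-R)=\mathcal F_G$, which contradicts connectivity of $V_a$ in $\mathcal G(G)$.

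With this established, $R$ is a symmetric stochastic matrix, so it is reversible with uniform stationary distribution. Corollary~\ref{cor:rga-convergence} then gives convergence to $\Pi_G$, which is averaging on each $V_a$, and to $n^{-1}\one\one^{\mathrm T}$ when $r=1$.
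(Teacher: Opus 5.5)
Your proof is correct. Parts (i) and (ii) follow the paper's route: Corollary~\ref{cor:smaller}, Birkhoff--von Neumann, Lemma~\ref{lem:interaction-energy}, and row-wise Jensen with unit column sums. The real difference is in how you locate the support of $R$.

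The paper proves (iii) first. It uses the Dirichlet-form identity $x^{\mathrm T}(I-R)x=\frac12\sum_{i,j}R_{ij}(x_i-x_j)^2$. For $R\geq0$ its zero set is the space of vectors constant on the components of the graph of $R$. By \eqref{eq:interaction-kernel} this space is $\mathcal F_G$, so the two partitions coincide. The vanishing of $R_{ij}$ across different $V_a$, and hence the componentwise bounds in (ii), are derived as consequences.

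You instead read off block diagonality of $K$, and hence of $R=K^{-1}$, directly from $K_{ij}=G_{ij}(G^{-1})_{ij}$; indeed $G_{ij}=0$ alone suffices. You then need only the indicator argument $R\one_S=\one_S$ for connectivity inside each $V_a$. Your route shows that the ``no edges between components'' half is purely structural and holds in every dimension. Nonnegativity enters only for connectivity within each $V_a$ and for the row averages being convex combinations. Likewise, your $p=2$ equality argument, via $I-R^2\succeq0$ with kernel $\ker(I-R)$, is spectral and dimension-free. The paper's route has the advantage of getting both directions of the component identification from a single identity.

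One step should be made explicit. In the strictly convex equality case, suppose you know that $y$ is constant on $\{j:R_{ij}>0\}$ for each $i$. This gives $y_j=y_i$ whenever $R_{ij}>0$ only if $i$ itself lies in that set, i.e.\ only if $R_{ii}>0$. That holds because $R\succ0$, and the paper invokes it explicitly. Without it the inference fails: take the swap matrix $\left(\begin{smallmatrix}0&1\\1&0\end{smallmatrix}\right)$ with $y=(0,1)^{\mathrm T}$, which gives equality although $y$ is not constant. With $R_{ii}>0$ you even get $Ry=y$ directly, hence $y\in\ker(I-R)=\mathcal F_G$ without appealing to (iii). You should also state the converse direction, which is immediate since $Ry=y$ on $\mathcal F_G$.
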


\begin{proof}
Corollary~\ref{cor:smaller} gives $R\geq0$, positive definiteness,
and double stochasticity. The Birkhoff--von Neumann theorem yields the
decomposition in \eqref{eq:structure-stochastic}. The spectral bounds
and multiplicity statement follow from Lemma~\ref{lem:interaction-energy}.
On $\mathcal F_G$, $R$ is the identity, while on its orthogonal
complement it is positive definite with spectrum in $(0,1)$. This
also proves $\Pi_G\preceq R$.

The component assertion follows from
\begin{equation}\label{eq:markov-energy}
 x^{\mathrm{T}}(I-R)x=\frac12\sum_{i,j}R_{ij}(x_i-x_j)^2.
\end{equation}
The quadratic form on the right vanishes exactly on vectors constant on
each connected component of the graph of $R$. By
\eqref{eq:interaction-kernel}, this space is exactly $\mathcal F_G$,
so the two componentwise-constant subspaces agree. A partition is
determined by its componentwise-constant subspace: vertices $i$ and $j$
belong to the same part exactly when $x_i=x_j$ for every vector in that
subspace. Hence the two partitions coincide. In particular, $R_{ij}=0$
between distinct components $V_a$.

Each row of $R$ gives a convex combination supported on its own
component, proving the coordinate bounds. Double stochasticity proves
$Ry\prec y$. Applying Jensen's inequality to each row and using the
column sums gives
\eqref{eq:interaction-convex}. Taking $\varphi(s)=|s|^p$ proves the
finite-$p$ norm bounds; the coordinate formula gives the $p=\infty$
case. Since $R\succ0$, every $R_{ii}$ is positive. For strictly convex
$\varphi$, equality in every row therefore forces $y_i=y_j$ whenever
$R_{ij}>0$, and hence $y\in\mathcal F_G$. Conversely, $Ry=y$ on
this space, proving equality. Taking $\varphi(s)=s^2$ gives the
stated Euclidean equality condition.

Finally, symmetry gives detailed balance with the uniform distribution,
and Corollary~\ref{cor:rga-convergence} gives componentwise averaging
under iteration.
\end{proof}

The theorem describes the static interaction equation $K_Gz=y$;
closed-loop stability requires assumptions on the dynamics.
It also gives the following necessary condition for the range of the
RGA map on positive definite matrices:
\begin{equation}\label{eq:rga-range}
 \{G\circ G^{-1}:G\in\Spp{n}\}
 \subseteq\{K\in\mathbb S^n:K\succeq I,\ K\one=\one,
                         \ K^{-1}\geq0\text{ entrywise}\},
 \qquad n\leq6.
\end{equation}
No converse is asserted. Inverse nonnegativity does not require the
off-diagonal entries of $K_G$ to be nonpositive.

\subsection{A reverse Schur--Horn theorem}

The Schur--Horn theorem states that the diagonal of a real symmetric
matrix is majorized by its eigenvalue vector \cite{Horn1954}.
The direction reverses for matrices diagonalized by a real symmetric
positive definite matrix of order at most six. Uhlmann
\cite{UhlmannSPDD2023} introduced this class under the name
\emph{special PD-diagonalizable matrices} and established the result
for $n\leq4$.

\begin{theorem}[Reverse Schur--Horn]\label{thm:reverse-schur-horn}
Let $1\leq n\leq6$, $P\in\Spp{n}$, and $\lambda\in\R^n$. Set
\[
 B=P\Diag(\lambda) P^{-1},\qquad b=\diag(B).
\]
Then
\begin{equation}\label{eq:reverse-schur-horn}
 \lambda=R_Pb,\qquad \lambda\prec b.
\end{equation}
Consequently,
\begin{equation}\label{eq:reverse-convex}
 \sum_i\varphi(\lambda_i)\leq\sum_i\varphi(b_i),\qquad
 \min_i b_i\leq\lambda_j\leq\max_i b_i\quad(1\leq j\leq n)
\end{equation}
for every real-valued convex function $\varphi$ on an interval containing
all $b_i$. For strictly convex $\varphi$, equality in the convex-sum
inequality holds if and only if
$P\Diag(\lambda)=\Diag(\lambda) P$, equivalently $B=\Diag(\lambda)$.
This is also equivalent to $\lambda$ and $b$ agreeing up to permutation.
\end{theorem}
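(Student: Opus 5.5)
The plan is to first prove the identity $\lambda=R_Pb$ by direct computation, and then read off majorization and the convex inequalities from the double stochasticity supplied by Corollary~\ref{cor:smaller}. For the identity, write $b_i=\sum_j P_{ij}\lambda_j(P^{-1})_{ji}$. Since $P^{-1}$ is symmetric, this is $b_i=\sum_j (P\circ P^{-1})_{ij}\lambda_j$, that is, $b=K_P\lambda$. Because $K_P\succ0$, it follows that $\lambda=R_Pb$. This step needs no hypothesis on $n$.

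Next, for $1\le n\le6$, Corollary~\ref{cor:smaller} tells us that $R_P$ is doubly stochastic. By the classical characterization recalled above, $\lambda=R_Pb$ then gives $\lambda\prec b$. The convex-sum inequality in \eqref{eq:reverse-convex} follows from Jensen's inequality applied row by row: $\varphi(\lambda_i)\le\sum_j(R_P)_{ij}\varphi(b_j)$, and summing over $i$ while using the unit column sums gives the claim. The bounds $\min_i b_i\le\lambda_j\le\max_i b_i$ hold because each $\lambda_j$ is a convex combination of the $b_i$. This is exactly Theorem~\ref{thm:rga-structure}\ref{it:structure-averaging} applied with $G=P$, $y=b$ and $z=\lambda$, since $K_P\lambda=b$. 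So I will simply cite that part.

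For the equality case, Theorem~\ref{thm:rga-structure}\ref{it:structure-averaging} says that equality holds for strictly convex $\varphi$ if and only if $b\in\mathcal F_P$. On $\mathcal F_P$ we have $R_Pb=b$ by Lemma~\ref{lem:interaction-energy}, so equality is equivalent to $\lambda=b\in\mathcal F_P$, i.e.\ to $\lambda\in\mathcal F_P$. By the proof of Lemma~\ref{lem:interaction-energy}, $x\in\mathcal F_P$ exactly when $P\Diag(x)=\Diag(x)P$. Hence equality holds exactly when $P$ commutes with $\Diag(\lambda)$, which is equivalent to $B=\Diag(\lambda)$.

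The remaining equivalence is with "$\lambda$ and $b$ agree up to permutation". One direction is trivial, since $B=\Diag(\lambda)$ gives $b=\lambda$. For the converse, if $b$ is a permutation of $\lambda$, then $\sum_i\varphi(\lambda_i)=\sum_i\varphi(b_i)$ for every $\varphi$, in particular for a strictly convex one. The equality case just established then gives $B=\Diag(\lambda)$.

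The only delicate point I expect is keeping the equality characterization precise. It relies on $R_{ii}>0$ and on the component structure from Theorem~\ref{thm:rga-structure}, which I will invoke rather than reprove.
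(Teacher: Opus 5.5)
Your proposal is correct and follows the paper's proof essentially step for step. You derive $b=K_P\lambda$ from the symmetry of $P^{-1}$ and invoke Theorem~\ref{thm:rga-structure}\ref{it:structure-averaging} with $y=b$ and $z=\lambda$. You reduce the equality case to $b\in\mathcal F_P$ and then to commutation via Lemma~\ref{lem:interaction-energy}, and you handle the permutation case with a strictly convex test function (the paper uses $\varphi(s)=s^2$). The one step you leave implicit, that $\lambda\in\mathcal F_P$ forces $b=K_P\lambda=\lambda$, is immediate from $\ker(K_P-I)=\mathcal F_P$ in \eqref{eq:interaction-kernel}.
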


\begin{proof}
Since $P^{-1}$ is symmetric, the diagonal identity is
\begin{equation}\label{eq:similarity-diagonal}
 b_i=\sum_jP_{ij}\lambda_j(P^{-1})_{ji},\qquad b=K_P\lambda.
\end{equation}
Thus $\lambda=R_Pb$, and Theorem~\ref{thm:rga-structure} proves
\eqref{eq:reverse-schur-horn} and \eqref{eq:reverse-convex}.
Equality for strictly convex $\varphi$ is equivalent to $b\in\mathcal F_P$.
In that case
$\lambda=R_Pb=b\in\mathcal F_P$, which, by
Lemma~\ref{lem:interaction-energy}, is equivalent to
$P\Diag(\lambda)=\Diag(\lambda) P$. The converse is immediate. Finally, if
$\lambda$ is a permutation of $b$, their squared Euclidean norms are
equal; the equality statement applied to $\varphi(s)=s^2$ gives the
same commutation condition.
\end{proof}

The matrix $B$ may be nonsymmetric; the positive definiteness assumption
concerns its diagonalizer $P$. The term ``reverse Schur--Horn'' refers
to the majorization inequality, without a claim that every pair
$\lambda\prec b$ is realizable by a positive definite similarity.

The conclusion also holds for a diagonally scaled and permuted
diagonalizer. Let
$T=D_1U P V D_2$, where $D_1,D_2$ are nonsingular real diagonal
matrices and $U,V$ are permutation matrices. Then
\[
 T\circ T^{-\mathrm{T}}=U K_P V,\qquad
 (T\circ T^{-\mathrm{T}})^{-1}=V^{\mathrm{T}} R_P U^{\mathrm{T}}\in\mathscr B_n.
\]
Thus $B=T\Diag(\lambda) T^{-1}$ satisfies $\lambda\prec\diag(B)$.
The inverse RGA of $T$ need not be symmetric.

For a fixed positive definite diagonalizer, inverse nonnegativity is
equivalent to reverse majorization for every real spectrum.
\begin{proposition}\label{prop:reverse-equivalence}
For any $n\geq1$ and any fixed $P\in\Spp{n}$, the following are equivalent:
\begin{enumerate}[label=\textup{(\roman*)},leftmargin=*]
\item $R_P\geq0$ entrywise;
\item $\lambda\prec\diag(P\Diag(\lambda) P^{-1})$ for every $\lambda\in\R^n$.
\end{enumerate}
\end{proposition}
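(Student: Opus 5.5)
The plan is to use the diagonal identity \eqref{eq:similarity-diagonal}, $b=K_P\lambda$, which holds for any $n$ and any $P\in\Spp{n}$ because $P^{-1}$ is symmetric. Inverting gives $\lambda=R_Pb$. Since $\lambda$ ranges over all of $\R^n$ and $K_P$ is invertible, $b=\diag(P\Diag(\lambda)P^{-1})$ also ranges over all of $\R^n$. Statement (ii) therefore says exactly that $R_Pb\prec b$ for every $b\in\R^n$. By \eqref{eq:rowsum}, $R_P$ is symmetric with $R_P\one=\one$, so its row and column sums are all one. The equivalence thus reduces to a classical fact: a real matrix $T$ with unit row and column sums satisfies $Tb\prec b$ for all $b$ if and only if $T\geq0$ entrywise.

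For (i)$\Rightarrow$(ii): if $R_P\geq0$, then $R_P$ is doubly stochastic. The characterization recalled before Lemma~\ref{lem:interaction-energy} (that $x\prec y$ whenever $x=Ty$ with $T\in\mathscr B_n$) then gives $\lambda=R_Pb\prec b$.

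For (ii)$\Rightarrow$(i), I will test with coordinate vectors. Given $j$, choose $\lambda=R_Pe_j$; this choice corresponds to $b=K_P\lambda=e_j$. Then (ii) gives $R_Pe_j\prec e_j$. Majorization by $e_j$ means the vector $x=R_Pe_j$ has coordinate sum $1$ and, taking $k=n-1$ in the definition, the sum of its $n-1$ largest coordinates is at most $1$. Hence its smallest coordinate satisfies $x_{\min}=\sum_i x_i-\sum_{i=1}^{n-1}x_i^{\downarrow}\geq 0$, so every entry of column $j$ of $R_P$ is nonnegative. Letting $j$ vary gives $R_P\geq0$. The case $n=1$ is trivial, since then $R_P=1$.

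None of these steps is a real obstacle. The only point needing care is to state explicitly that the map $\lambda\mapsto b$ is onto $\R^n$, which is what allows $b=e_j$ to be chosen.
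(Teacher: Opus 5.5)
Your proposal is correct and follows the paper's proof essentially verbatim. For (i)$\Rightarrow$(ii) you use the identity $b=K_P\lambda$ together with double stochasticity, and for (ii)$\Rightarrow$(i) you test with $\lambda=R_Pe_j$, which gives $R_Pe_j\prec e_j$, and then use the $k=n-1$ inequality with total sum $1$ to conclude that each column is nonnegative.
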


\begin{proof}
If $R_P\geq0$, its unit row and column sums make it doubly stochastic,
so \eqref{eq:similarity-diagonal} proves the majorization assertion.
Conversely, fix $j$ and take $\lambda=R_Pe_j$. Then
$\diag(P\Diag(\lambda) P^{-1})=e_j$, so the second assertion implies
$R_Pe_j\prec e_j$. For $n\geq2$, the majorization inequality at $k=n-1$,
together with the total sum $1$, gives
$\min_i(R_Pe_j)_i\geq0$; the case $n=1$ is immediate.
Hence every column of $R_P$ is nonnegative.
\end{proof}

In particular, take $P=G_7$ from Example~\ref{ex:seven} and
$\lambda=R_Pe_2$. Direct inversion gives
\begin{equation}\label{eq:reverse-seven}
 \lambda=\frac1{1155}(-1,881,55,55,55,55,55)^{\mathrm{T}},
 \qquad \diag(P\Diag(\lambda) P^{-1})=e_2.
\end{equation}
The sum of the six largest coordinates of $\lambda$ is
$1156/1155>1$, whereas the corresponding sum for $e_2$ is $1$.
Hence $\lambda\not\prec e_2$. Adjoining an identity block to $P$
and zero coordinates to $\lambda$ proves failure in every dimension
$n\geq7$. Thus the universal dimension bound in
Theorem~\ref{thm:reverse-schur-horn} is sharp as well.

\section{Concluding remarks}\label{sec:conclusion}

Theorem~\ref{thm:main}, Example~\ref{ex:seven}, and
Proposition~\ref{prop:reverse-equivalence} give the following
classification for each integer $n\geq1$:
\[
 \begin{aligned}
 &R_G\geq0\text{ for every }G\in\Spp{n}\\
 &\quad\Longleftrightarrow\quad
 \lambda\prec\diag(P\Diag(\lambda) P^{-1})
 \text{ for every }P\in\Spp{n},\ \lambda\in\R^n\\
 &\quad\Longleftrightarrow\quad n\leq6.
 \end{aligned}
\]
Thus the threshold is intrinsic to both universal statements.
The equivalence in Theorem~\ref{thm:equivalence} explains it through
failure of the base inequality at $C=I_5$. In contrast, the spectral
bounds $K_G\succeq I$ and $0\prec R_G\preceq I$, the unit row and
column sums, and the convergence $R_G^k\to\Pi_G$ hold in every
dimension. Entrywise nonnegativity adds the averaging and convex-sum
contraction properties of Theorem~\ref{thm:rga-structure}.

Structured classes retain inverse nonnegativity in higher dimensions.
For positive definite $A,B$,
\[
 R_{A\oplus B}=R_A\oplus R_B,\qquad
 R_{A\otimes B}=R_A\otimes R_B.
\]
Hence direct sums and Kronecker products of factors of order at most
six satisfy inverse nonnegativity and reverse majorization for every
real spectrum. This extends the Kronecker construction in
\cite{Uhlmann2023} to arbitrary positive definite factors of order at
most six. Positive definite matrices with nonpositive off-diagonal
entries form another such class in every dimension: $G$ is a
nonsingular $M$-matrix, so $G^{-1}\geq0$; consequently $K_G$ is also
a positive definite matrix with nonpositive off-diagonal entries and
has a nonnegative inverse.

The only computer-assisted step is the rational polynomial identity
\eqref{eq:certificate}. A conceptual explanation of the Pl\"ucker
correction $\tau J$ and the ten-square factorization would provide
further insight into the four-dimensional inequality. The bordering
representation may also be useful for characterizing zero entries of
$R_G$ and identifying additional classes with inverse nonnegativity
in higher dimensions.

Finally, our theorem provides an example of a matrix-positivity property with a precise dimension threshold. This makes it potentially interesting beyond control theory, particularly for the study of dimension-dependent positivity, Hadamard products, positive maps, and sum-of-squares certificates.

\section*{Acknowledgements}
The author acknowledges the use of GPT-6 Astra to assist with brainstorming, mathematical development, and manuscript drafting. The author is solely responsible for the final content, analysis, and conclusions.

\section*{Funding}
This work was jointly funded by the National Key R\&D Program of China under grant No.~2023YFA1009401 and the National Natural Science Foundation of China under grant Nos.~12201618 and 12171324.

\appendix
\section{The sum-of-squares certificate}\label{subsec:coefficients}

The ten polynomial vectors in \eqref{eq:certificate} have coordinates
ordered by $\mathcal P=(12,13,14,23,24,34)$. For $\xi\in\R^6$, put
\[
 q_\ell=p_\ell^{\mathrm{T}}\xi=\sum_{ij\in\mathcal P}p_{\ell,ij}\xi_{ij}.
\]
Then \eqref{eq:certificate} is equivalently
\[
 \xi^{\mathrm{T}}\mathcal W\xi=\sum_{\ell=1}^8q_\ell^2+2q_9^2+2q_{10}^2.
\]
This identity holds for unrestricted $\xi$; no Pl\"ucker relation is
imposed. The parameters $a,b,c,d,e,f$ are those in \eqref{eq:L}.
All coordinates are listed below.

\noindent For $p_{1}$,
\begin{align*}
 p_{1,12}&=2 a b e^{2} + a b - 2 a c^{2} d f + 2 b c d f - 2 b d e,\\
 p_{1,13}&=2 a f \left(b e - c d\right),\\
 p_{1,14}&=a b e - a c d - b d,\\
 p_{1,23}&=\left(a c - b\right) \left(2 c f^{2} + c - 2 e f\right),\\
 p_{1,24}&=2 a b d + a c^{2} f - a c e - b c f + 2 b e,\\
 p_{1,34}&=2 a c^{2} e + 2 a c f - 2 b c e - b f.
\end{align*}

\noindent For $p_{2}$,
\begin{align*}
 p_{2,12}&=2 a f \left(b e - c d\right),\\
 p_{2,13}&=a \left(2 b f^{2} + b - 2 d f\right),\\
 p_{2,14}&=a \left(b f - d\right),\\
 p_{2,23}&=a \left(2 c f^{2} + c - 2 e f\right),\\
 p_{2,24}&=a \left(c f - e\right),\\
 p_{2,34}&=2 a \left(b d + c e + f\right).
\end{align*}

\noindent For $p_{3}$,
\begin{align*}
 p_{3,12}&=a b f + 2 a c^{2} d + a d - 2 b c d,\\
 p_{3,13}&=- a b e + a c d - b d,\\
 p_{3,14}&=0,\\
 p_{3,23}&=2 a b d + 2 a c^{2} f^{3} + a c^{2} f - 4 a c e f^{2} - a c e + 2 a e^{2} f\\
 &\quad - 2 b c f^{3} - b c f + 2 b e f^{2} + 2 c d f^{2} + 2 c d - 2 d e f,\\
 p_{3,24}&=a c^{2} f^{2} - 2 a c e f + a e^{2} - b c f^{2} + 2 b e f - d e,\\
 p_{3,34}&=2 a c^{2} e f - 2 a c e^{2} + 2 a c f^{2} - 2 a e f - 2 b c e f - b f^{2} + 2 c d e + d f.
\end{align*}

\noindent For $p_{4}$,
\begin{align*}
 p_{4,12}&=c \left(2 a c e f - 2 a e^{2} - a - 2 b e f + 2 d e\right),\\
 p_{4,13}&=c \left(2 a c f^{2} + a c - 2 a e f - 2 b f^{2} - b + 2 d f\right),\\
 p_{4,14}&=c \left(a c f - a e - b f + 2 d\right),\\
 p_{4,23}&=0,\\
 p_{4,24}&=- c \left(2 a d + e\right),\\
 p_{4,34}&=c \left(2 a c d - 2 b d - f\right).
\end{align*}

\noindent For $p_{5}$,
\begin{align*}
 p_{5,12}&=2 a c^{2} e + a c f + a e - 2 b c e,\\
 p_{5,13}&=- 2 a c^{2} f^{3} - a c^{2} f + 4 a c e f^{2} + a c e - 2 a e^{2} f\\
 &\quad + 2 b c f^{3} + b c f - 2 b e f^{2} - 2 b e - 2 c d f^{2} + 2 d e f,\\
 p_{5,14}&=- a c^{2} f^{2} + 2 a c e f - a e^{2} + b c f^{2} - 2 c d f + d e,\\
 p_{5,23}&=e \left(2 a b + c\right),\\
 p_{5,24}&=- 2 a f \left(b e - c d\right),\\
 p_{5,34}&=2 a b e^{2} - 2 a c^{2} d f + 2 b c d f - 2 b d e + c f^{2} - e f.
\end{align*}

\noindent For $p_{6}$,
\begin{align*}
 p_{6,12}&=2 f \left(a c^{2} f^{2} + a c^{2} - 2 a c e f + a e^{2} + a - b c f^{2} - b c + b e f + c d f - d e\right),\\
 p_{6,13}&=- b f,\\
 p_{6,14}&=f \left(b f - d\right),\\
 p_{6,23}&=f \left(2 a b + c\right),\\
 p_{6,24}&=- f \left(2 a b f - 2 a d + c f - e\right),\\
 p_{6,34}&=2 a f \left(b e - c d\right).
\end{align*}

\noindent For $p_{7}$,
\begin{align*}
 p_{7,12}&=2 a c^{2} f^{2} + 2 a c^{2} - 2 a c e f + a - 2 b c f^{2} - 2 b c + b e f + c d f,\\
 p_{7,13}&=- a c^{2} e f + a c e^{2} + a c f^{2} + a c - a e f + b c e f - b - c d e,\\
 p_{7,14}&=- a c^{2} e - a e + b c e + b f,\\
 p_{7,23}&=- a b e^{2} - a b f^{2} + a b + a c^{2} d f + a d f - b c d f + b d e + c,\\
 p_{7,24}&=- 2 a b f + a c^{2} d + a d - b c d - c f,\\
 p_{7,34}&=2 a b e - b d + c e.
\end{align*}

\noindent For $p_{8}$,
\begin{align*}
 p_{8,12}&=2 a c e f - 2 a e^{2} - a - b e f - c d f + 2 d e,\\
 p_{8,13}&=- a c^{2} e f + a c e^{2} + a c f^{2} + a c - a e f + b c e f - c d e,\\
 p_{8,14}&=- a c^{2} e - a e + b c e + d,\\
 p_{8,23}&=- a b e^{2} - a b f^{2} - a b + a c^{2} d f + a d f - b c d f + b d e,\\
 p_{8,24}&=a c^{2} d - a d - b c d - e,\\
 p_{8,34}&=2 a c d - b d + c e.
\end{align*}

\noindent For $p_{9}$,
\begin{align*}
 p_{9,12}&=f \left(b e - c d\right),\\
 p_{9,13}&=\frac{2 b f^{2} + b - 2 d f}{2},\\
 p_{9,14}&=\frac{b f - d}{2},\\
 p_{9,23}&=\frac{2 c f^{2} + c - 2 e f}{2},\\
 p_{9,24}&=\frac{c f - e}{2},\\
 p_{9,34}&=b d + c e + f.
\end{align*}

\noindent For $p_{10}$,
\begin{align*}
 p_{10,12}&=0,\\
 p_{10,13}&=\frac{2 a c^{2} e f - 2 a c e^{2} + 2 a c f^{2} - 2 a e f - 2 b c e f - 2 b f^{2} - b + 2 c d e + 2 d f}{2},\\
 p_{10,14}&=\frac{2 a c^{2} e + 2 a c f - 2 b c e - b f + d}{2},\\
 p_{10,23}&=\frac{2 a b e^{2} + 2 a b f^{2} + 2 a b - 2 a c^{2} d f - 2 a d f + 2 b c d f - 2 b d e + 2 c f^{2} + c - 2 e f}{2},\\
 p_{10,24}&=- \frac{2 a c^{2} d + 2 a d - 2 b c d - c f + e}{2},\\
 p_{10,34}&=0.
\end{align*}

The files \texttt{tau\_sos\_polynomials.json} and
\texttt{tau\_sos\_polynomials.txt} contain the same coefficients.
The script \texttt{verify\_four\_dimensional\_sos.py} constructs
$C$, $K_C$, and $\adj(K_C)$ over $\Q[a,b,c,d,e,f]$ and checks all
36 entries of \eqref{eq:certificate}. It also verifies the adjugate
identity, the linearity of each $q_\ell$ in $\xi$, and the positivity
of the rational weights.
They can be downloaded at \url{https://wangjie212.github.io/jiewang/codes/IRGA.zip}.

\bibliographystyle{amsplain}
\bibliography{references}
\end{document}